\def\BBox{\kern  -0.2cm\hbox{\vrule width 0.2cm height 0.2cm}}
\newtheorem{teo}{Theorem}[section]
\newtheorem{coro}[teo]{Corollary}
\newtheorem{lema}[teo]{Lemma}
\newtheorem{conjecture}[teo]{Conjecture}
\theoremstyle{definition}
\theoremstyle{remark}
\title{On strong Skolem starters for $\mathbb{Z}_{pq}$}
\author{Adrián Vázquez-Ávila\thanks{adrian.vazquez@unaq.edu.mx}\\
{\small Subdirección de Ingeniería y Posgrado}\\
{\small Universidad Aeronáutica en Querétaro}\\
}
\date{}
\begin{document}
\maketitle
\begin{abstract}
In 1991, N. Shalaby conjectured that any additive group $\mathbb{Z}_n$, where $n\equiv1$ or 3 (mod 8) and $n \geq11$, admits a strong Skolem starter and constructed these starters of all admissible orders $11\leq n\leq57$. Shalaby and et al. [O. Ogandzhanyants, M. Kondratieva and N. Shalaby, \emph{Strong Skolem Starters}, J. Combin. Des. {\bf 27} (2018), no. 1, 5--21] was proved if $n=\Pi_{i=1}^{k}p_i^{\alpha_i}$, where $p_i$ is a prime number such that $ord(2)_{p_i}\equiv 2$ (mod 4) and $\alpha_i$ is a non-negative integer, for all $i=1,\ldots,k$, then $\mathbb{Z}_n$ admits a strong Skolem starter. On the other hand, the author [A. Vázquez-Ávila, \emph{A note on strong Skolem starters}, Discrete Math. Accepted] gives different families of strong Skolem starters for $\mathbb{Z}_p$ than Shalaby et al, where $p\equiv3$ (mod 8) is an odd prime. Recently, the author [A. Vázquez-Ávila, \emph{New families of strong Skolem starters}, Submitted] gives different families of strong Skolem starters of $\mathbb{Z}_{p^n}$ than Shalaby et al, where $p\equiv3$ (mod 8) and $n$ is an integer greater than 1.

In this paper, we gives some different families of strong Skolem starters of $\mathbb{Z}_{pq}$, where $p,q\equiv3$ (mod 8) are prime numbers such that $p<q$ and $(p-1)\nmid(q-1)$.  
\end{abstract}

\textbf{Keywords.} Strong starters, Skolem starters.

\section{Introduction}
Let $G$ be a finite additive abelian group of odd order $n=2k+1$, and let $G^*=G\setminus\{0\}$ be the set of non-zero elements of $G$. A \emph{starter} for $G$ is a set $S=\{\{x_i,y_i\},i=1,\ldots,k\}$ such that $\left\{\{x_i\}\cup\{y_i\}:i=1\ldots,k\right\}=G^*$
and $\left\{\pm(x_i-y_i):i=1,\ldots,k\right\}=G^*$. Moreover, if all elements $\left\{x_i+y_i:i=1,\ldots,k\right\}\subseteq G^*$ are different, then $S$ is called \emph{strong starter} for $G$. To see some works related to strong starters the reader may consult \cite{Avila,MR0325419,dinitz1984,MR1010576,MR0392622,MR1044227,MR808085,MR0249314,MR0260604}.  

Strong starters were first introduced by Mullin and Stanton in \cite{MR0234587} in constructing Room squares. Starters and strong starters have been useful to construct many combinatorial designs such as Room cubes \cite{MR633117}, Howell designs \cite{MR728501,MR808085}, Kirkman triple systems \cite{MR808085,MR0314644}, Kirkman squares and cubes \cite{MR833796,MR793636},  and factorizations of complete graphs \cite{MR0364013,Bao,MR2206402,DinitzSequentially,MR1010576,MR623318,MR685627,Meszka,AvilaC4free}.

Let $n=2k+1$, and $1<2<\ldots<2k$ be the order of $\mathbb{Z}_n^*$. A starter for $\mathbb{Z}_n$ is \emph{Skolem} if it can be written as $S=\{\{x_i,y_i\}:i=1,\ldots,k\}$ such that $y_i>x_i$ and $y_i-x_i=i$ (mod n), for $i=1,\ldots,k$. In \cite{ShalabyThesis}, it was proved the Skolem starter for $\mathbb{Z}_n$ exits if and only if $n\equiv1,3$ (mod 8). A starter which is both Skolem and strong is called \emph{strong Skolem starter}.

Shalaby in \cite{ShalabyThesis} proposed the following:

\begin{conjecture}
If $n\equiv1,3$ (mod 8) and $n \geq11$, then $\mathbb{Z}_n$ admits a strong Skolem starter. 
\end{conjecture}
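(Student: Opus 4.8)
The plan is to prove the conjecture by reducing the general admissible modulus to a short list of ``atomic'' cases that are already understood, together with a new gluing step. First I would record the standard reformulation: writing $n=2k+1$, a Skolem starter for $\mathbb{Z}_n$ is the same object as a Skolem sequence of order $k$, i.e.\ a partition of $\{1,\ldots,2k\}$ into pairs $\{x_i,y_i\}$ with $y_i-x_i=i$ for $i=1,\ldots,k$; such sequences exist exactly when $k\equiv 0,1\pmod 4$, which is precisely $n\equiv 1,3\pmod 8$, and the hypothesis $n\geq 11$ gives $k\geq 5$. Under this dictionary the strong condition says that the $k$ pair-sums $s_i=x_i+y_i$ are pairwise distinct modulo $n$. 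Thus the conjecture is the assertion that for every admissible $k\geq 5$ at least one Skolem sequence of order $k$ is strong, and I would attack it by producing such a sequence for every $n$.

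The reduction I have in mind uses the existing constructions as building blocks: the result of Ogandzhanyants--Kondratieva--Shalaby covers every $n$ all of whose prime factors $p$ satisfy $\mathrm{ord}(2)_p\equiv 2\pmod 4$, and the author's earlier families cover the prime powers $p^{\alpha}$ with $p\equiv 3\pmod 8$. The finitely many moduli $11\leq n\leq 57$ are already tabulated by Shalaby, so these can be quoted directly. What remains is every other admissible $n$, which I would try to build from its prime-power factors. The naive route --- transporting a strong Skolem starter through the isomorphism $\mathbb{Z}_{m\ell}\cong\mathbb{Z}_m\times\mathbb{Z}_\ell$ --- fails, because the Skolem property is a statement about the \emph{integer magnitudes} $y_i-x_i=i$ and is destroyed by the Chinese Remainder map. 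Instead I would work entirely inside the interval $\{1,\ldots,2k\}$, using a block/interval gluing in the spirit of the classical extension and recombination techniques for Skolem sequences: start from a strong Skolem sequence on a core interval, then append paired blocks realizing the large differences, choosing the block placements so that (a) each difference $1,\ldots,k$ occurs exactly once and (b) the new pair-sums are forced into residues not yet used modulo $n$.

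For the strong (sum-distinctness) condition I would try to make the gluing explicit enough that each $s_i$ is an affine function of $i$ on each block; distinctness modulo $n$ then reduces, factor by factor, to showing that finitely many linear forms are nonzero modulo each $p^{\alpha}$, which one can settle with elementary quadratic-residue information (the hypotheses $p\equiv 3\pmod 8$ and the order of $2$ control exactly this). Where a clean closed form is unavailable I would fall back on a counting argument: using the known exponential lower bound on the number of Skolem sequences of order $k$, estimate by inclusion--exclusion (or a switching argument) the number of sequences admitting at least one sum-collision $s_i\equiv s_j\pmod n$, and show it is strictly smaller, so that a strong sequence must survive.

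The main obstacle is the structural tension between the two requirements. The Skolem condition is rigid and arithmetic-free (it fixes differences as exact integers), whereas the strong condition is purely modular; there is no product theorem bridging them, so each $n$ must be assembled by hand, and the real difficulty is proving \emph{uniformly} over all admissible $n$ that the remaining freedom in the gluing suffices to separate all $k$ sums modulo $n$. I expect the hardest point to be controlling collisions \emph{across} the pieces --- between the sums coming from the core and those from the appended blocks, and between contributions associated with different prime-power factors of $n$ --- since it is exactly there that the magnitude-based Skolem bookkeeping and the residue-based strong bookkeeping interact.
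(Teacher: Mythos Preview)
The statement you are trying to prove is not a theorem in the paper; it is Shalaby's \emph{conjecture}, quoted by the author as motivation and still open. The paper contains no proof of it whatsoever. What the paper actually proves is the very special case $n=pq$ with $p,q\equiv 3\pmod 8$ primes, $p<q$, and $(p-1)\nmid(q-1)$, by an explicit multiplicative construction: pair each $x$ with $2x$ on the three pieces $p\mathbb{Z}_q^*$, $q\mathbb{Z}_p^*$, and two cosets covering the unit group $G_{pq}$, then check starter, strong, and Skolem conditions directly using the facts that $-1$ and $2$ are non-squares modulo such primes. There is therefore no ``paper's own proof'' of the conjecture to compare your proposal to.

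As for the proposal itself, it is a research outline, not a proof, and you say as much. The genuine gaps are exactly the ones you flag: the ``new gluing step'' is never defined, the claim that pair-sums become affine functions of $i$ on each block is asserted without a construction that makes it true, and the fallback counting argument would require quantitative control on the number of Skolem sequences with a prescribed sum-collision, which is not available in the literature. More fundamentally, the reduction you sketch does not cover all admissible $n$: a general $n\equiv 1,3\pmod 8$ can have prime factors $p$ with $p\equiv 1,5,7\pmod 8$ or with $\mathrm{ord}(2)_p\not\equiv 2\pmod 4$, and for those neither the Ogandzhanyants--Kondratieva--Shalaby result nor the author's $p^\alpha$ families applies, so your ``building blocks'' are missing. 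Until someone supplies either the gluing construction in full or the counting bound, this remains a plan of attack on an open problem rather than a proof.
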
 

In \cite{Shalaby}, it was proved if $n=\Pi_{i=1}^{k}p_i^{\alpha_i}$, where $p_i$ is a prime such that $ord(2)_{p_i}\equiv 2$ (mod 4) and $\alpha_i$ is a non-negative integer, for all $i=1,\ldots,k$, then $\mathbb{Z}_n$ admits a strong Skolem starter, where $ord(2)_{p_i}$ is the order of the element 2 in $\mathbb{Z}_{p_i}$. In \cite{AvilaSkolem}, it was given different families of strong Skolem starters of $\mathbb{Z}_p$, where $p\equiv3$ (mod 8) is an odd prime, using a different method than in \cite{Shalaby}. Recently in \cite{AvilaSkolem2}, it was it was given different families of strong Skolem starters of $\mathbb{Z}_{p^n}$, where $p\equiv3$ (mod 8) and $n$ is an integer greater than 1, than in \cite{Shalaby}.

This paper is organized as follows. In Section \ref{sec:quadratic}, we recall some basic properties about quadratic residues and we present the strong Skolem starters of $\mathbb{Z}_{p}$ given in \cite{AvilaSkolem}; this idea is used in the main result of this paper, Theorem \ref{thm:main}. Finally, in section \ref{sec:main}, we give the main result of this paper, and we present one example. The main theorem states the following:
\begin{teo}
Let $p$ and $q$ be odd prime numbers such that $p,q\equiv3$ (mod 8) such that $p<q$ and $(p-1)\nmid(q-1)$. Then
$\mathbb{Z}_{pq}$ admits a strong Skolem starter. 
\end{teo}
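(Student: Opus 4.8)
The plan is to build the strong Skolem starter for $\mathbb{Z}_{pq}$ by combining, via the Chinese Remainder Theorem, the structure $\mathbb{Z}_{pq}\cong\mathbb{Z}_p\times\mathbb{Z}_q$ with the starters for $\mathbb{Z}_p$ recalled in Section~\ref{sec:quadratic} (and the analogous one for $\mathbb{Z}_q$, which exists since $q\equiv 3\pmod 8$ as well). Recall that the construction in \cite{AvilaSkolem} represents a starter for a prime by pairing each nonzero element with a suitable multiple determined by quadratic residues; concretely one uses a map of the form $x\mapsto \{x, \lambda x\}$ on cosets, so that differences $\pm(x-\lambda x)=\pm(1-\lambda)x$ and sums $(1+\lambda)x$ run over all of $\mathbb{Z}_p^*$ when $1-\lambda$ and $1+\lambda$ are units. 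First I would set up the analogous ``multiplier'' description for $\mathbb{Z}_{pq}$: since $\gcd(p,q)=1$, every element of $\mathbb{Z}_{pq}^*$ is either a unit, a nonzero multiple of $p$, or a nonzero multiple of $q$, and I would handle these three classes separately, using the prime-level starter on the multiples of $p$ (which form a copy of $\mathbb{Z}_q^*$) and on the multiples of $q$ (a copy of $\mathbb{Z}_p^*$), and a genuinely two-dimensional rule on the units.

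The key steps, in order, are as follows. (1)~Fix the multiplier $\lambda\in\mathbb{Z}_{pq}^*$ (via CRT, $\lambda\equiv\lambda_p\pmod p$ and $\lambda\equiv\lambda_q\pmod q$ with $\lambda_p,\lambda_q$ the multipliers from the prime constructions) and define $S$ by pairing $x$ with $\lambda x$ on each orbit, taking care that no $x$ is paired with itself, i.e.\ that $\lambda x\ne x$ for all relevant $x$; this forces $\lambda\not\equiv 1$ mod $p$ and mod $q$. (2)~Check the \emph{starter} condition: the pairs partition $\mathbb{Z}_{pq}^*$ (automatic from the orbit structure of multiplication by $\lambda$, provided the orbit sizes are even, which is where the hypothesis on orders enters) and the differences $\pm(1-\lambda)x$ cover $\mathbb{Z}_{pq}^*$, which holds iff $1-\lambda$ is a unit mod $pq$. (3)~Check the \emph{strong} condition: the sums $(1+\lambda)x$ must be distinct across pairs, which again reduces to $1+\lambda$ being a unit together with the pairs being genuinely distinct as unordered pairs. (4)~Finally, verify the \emph{Skolem} condition: after identifying $\mathbb{Z}_{pq}^*=\{1,2,\ldots,pq-1\}$ with the natural order, show the pairs can be indexed so that the larger minus the smaller realizes each value $1,\ldots,(pq-1)/2$ exactly once. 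This last point is the crux and the main obstacle: the Skolem property is not invariant under the CRT isomorphism, because the natural order on $\{1,\ldots,pq-1\}$ does not respect the product structure, so one cannot simply inherit it from the prime-level Skolem starters.

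To overcome this I would exploit the hypothesis $p<q$ and $(p-1)\nmid(q-1)$ explicitly. The condition $(p-1)\nmid(q-1)$ should guarantee that a common multiplier $\lambda$ can be chosen whose multiplicative order modulo $p$ and modulo $q$ are compatible in the way needed to make all orbits of $\langle\lambda\rangle$ acting on $\mathbb{Z}_{pq}^*$ have even size (so that the pairing is well defined and the counting of differences works out); without this arithmetic restriction the orbit structure could fail to pair up correctly. For the Skolem ordering, the plan is to write each pair explicitly in terms of the CRT coordinates and then sort by absolute value of the representative difference in $\{-(pq-1)/2,\ldots,(pq-1)/2\}$, using the prime-level Skolem property of $\mathbb{Z}_p$ (from \cite{AvilaSkolem}) as the ``base case'' and a careful interval-by-interval argument — splitting $\{1,\ldots,pq-1\}$ into blocks of length $p$ or $q$ — to show the differences hit each residue exactly once; the inequality $p<q$ is what makes this block decomposition go through. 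I would close by instantiating the construction in a small example (the one promised in Section~\ref{sec:main}, e.g.\ $p=3$, $q=11$ or $p=3$, $q=19$) to confirm the Skolem indexing concretely and to illustrate that the resulting family differs from those of \cite{Shalaby}.
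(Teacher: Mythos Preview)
Your framework---partitioning $\mathbb{Z}_{pq}^*$ into the units, the nonzero multiples of $p$, and the nonzero multiples of $q$, and pairing via a multiplier---is exactly what the paper does. But you miss the single idea that makes the argument work, and as a result you turn the easiest step into what you call ``the crux and the main obstacle''. The point is that the multiplier is the literal element $2\in\mathbb{Z}_{pq}$ on \emph{every} piece: one takes $pS_q=\{\{px,2px\}:x\in QR(q)\}$, $qS_p=\{\{qx,2qx\}:x\in QR(p)\}$, and on the units $G_{pq}$ the pairs $\{x,2x\}$ for $x$ ranging over two suitable cosets of $\langle r_2^2\rangle_{pq}$. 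With multiplier $2$ the Skolem property is essentially a one-line computation: writing $pq=2t+1$, if $a\in\{1,\dots,t\}$ then $2a$ does not wrap, so the larger-minus-smaller difference is $2a-a=a\le t$; if $a\in\{t+1,\dots,2t\}$ then $2a$ reduces to $2a-pq<a$, and the difference is $a-(2a-pq)=pq-a\le t$. Since $S$ is already a starter, these $t$ differences are distinct and hence equal $\{1,\dots,t\}$. No CRT-incompatibility with the natural order arises, and no ``interval-by-interval'' block argument is needed; your proposed approach to the Skolem step is both unnecessary and not actually carried out.

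Two further issues. First, your general CRT-built multiplier $\lambda\equiv\lambda_p\pmod p$, $\lambda\equiv\lambda_q\pmod q$ only works here because $\lambda_p=\lambda_q=2$ forces $\lambda=2$; for any other choice the Skolem verification really would be hard, and you give no mechanism for it. Second, you misidentify the role of the hypothesis $(p-1)\nmid(q-1)$: it is not about orbits having even size, but about ensuring $|\langle r_2\rangle_{pq}|=\operatorname{lcm}(p-1,q-1)=\tfrac{(p-1)(q-1)}{2}$, so that $\langle r_2^2\rangle_{pq}$ has index $4$ in $G_{pq}$ and (using Lemmas~\ref{lema:-1} and~\ref{lema:2}, which show $-1,2\notin\langle r_2^2\rangle_{pq}$) the four cosets $\langle r_2^2\rangle_{pq}$, $2\langle r_2^2\rangle_{pq}$, $\lambda\langle r_2^2\rangle_{pq}$, $2\lambda\langle r_2^2\rangle_{pq}$ partition $G_{pq}$. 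That coset structure is what makes the pairing on the units well defined and is the genuine content the hypothesis buys.
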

\section{A family of strong Skolem starters for $\mathbb{Z}_p$}\label{sec:quadratic}
Let $p$ be an odd prime power. An element $x\in\mathbb{Z}_p^*$ is called a \emph{quadratic residue} if there exists an element $y\in\mathbb{Z}_p^{*}$ such that $y^2=x$. If there is no such $y$, then $x$ is called a \emph{non-quadratic residue.} The set of quadratic residues of $\mathbb{Z}_p^{*}$ is denoted by $QR(p)$ and the set of non-quadratic residues is denoted by $NQR(p)$. It is well known that $QR(p)$ is a (cyclic) subgroup of $\mathbb{Z}_p^{*}$ of cardinality $\frac{p-1}{2}$ (see for example \cite{MR2445243}); also, if either $x,y\in QR(p)$ or $x,y\in NQR(p)$, then $xy\in QR(p)$, and if $x\in QR(p)$ and $y\in NQR(p)$, then $xy\in NQR(p)$.

The following theorems are well known results on quadratic residues. 
For more details of this kind of results the reader may consult \cite{burton2007elementary,MR2445243}.

\begin{teo}\label{col:menosuno}
	Let $p$ be an odd prime power, then
	\begin{enumerate}
		\item $-1\in QR(p)$ if and only if $p\equiv1$ (mod $4$).
		\item $-1\in NQR(p)$ if and only if $p\equiv3$ (mod $4$).
	\end{enumerate}
\end{teo}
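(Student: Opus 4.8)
The plan is to reduce both statements to a single parity computation in the cyclic group $\mathbb{Z}_p^*$. First I note that the two parts are logically complementary: for odd $p$ exactly one of the congruences $p\equiv1$ or $p\equiv3$ (mod $4$) holds, and every element of $\mathbb{Z}_p^*$ lies in exactly one of $QR(p)$ and $NQR(p)$. Hence it suffices to prove part~1, namely the equivalence $-1\in QR(p)\iff p\equiv1$ (mod $4$); part~2 then follows immediately by taking complements on both sides.

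For part~1 I would exploit the cyclic structure of $\mathbb{Z}_p^*$ that is already recalled above. Fix a generator $g$, so that $\mathbb{Z}_p^*=\langle g\rangle$ has even order $m=|\mathbb{Z}_p^*|$ and $QR(p)=\langle g^2\rangle$ consists precisely of the even powers of $g$ (this is exactly the index-$2$ cyclic subgroup of cardinality $m/2$ mentioned in the text). The key observation is that $-1$ is the unique element of order $2$ in $\mathbb{Z}_p^*$ (it satisfies $x^2=1$ and $x\neq1$ since $p>2$), and in a cyclic group of order $m$ this unique involution is $g^{m/2}$. Therefore $-1=g^{m/2}$ is a quadratic residue if and only if its exponent $m/2$ is even, i.e. if and only if $4\mid m$.

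It then remains to translate $4\mid m$ into a congruence on $p$. When $p$ is an odd prime we have $m=p-1$, and $4\mid(p-1)$ is equivalent to $p\equiv1$ (mod $4$), which proves part~1 and hence the theorem. (For a prime power $p=r^{k}$ one has instead $m=\phi(p)=r^{k-1}(r-1)$ with $r^{k-1}$ odd, so the same computation reads $4\mid m\iff r\equiv1$ (mod $4$); this is the only point requiring care, and it is the prime case that is actually used below, where $p$ and $q$ are primes congruent to $3$ modulo $8$, hence to $3$ modulo $4$.) An entirely equivalent route, should one prefer to avoid choosing a generator, is Euler's criterion $(-1)^{(p-1)/2}\equiv\left(\tfrac{-1}{p}\right)$ (mod $p$): the sign on the left is $+1$ exactly when $(p-1)/2$ is even, i.e. when $p\equiv1$ (mod $4$). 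There is no genuine obstacle beyond this parity bookkeeping; the entire content is the identification of $-1$ as the middle power of a generator.
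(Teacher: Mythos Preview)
The paper does not supply a proof of this statement at all: it is quoted as a well-known result with references to standard number-theory texts, so there is nothing to compare your argument against. Your proof via the cyclic structure of $\mathbb{Z}_p^*$ (identifying $-1$ with $g^{m/2}$ and reducing to the parity of $m/2$, or equivalently invoking Euler's criterion) is the standard one and is correct for primes, which is the only case actually used later in the paper.

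One small point of interpretation in your parenthetical remark on prime powers. In this paper $\mathbb{Z}_p$ for an odd prime power $p$ is being used as the finite field $\mathbb{F}_p$ (note that $QR(p)$ is said to have size $(p-1)/2$), so $m=|\mathbb{Z}_p^*|=p-1$ in every case and your parity computation applies verbatim: $4\mid(p-1)$ if and only if $p\equiv1$ (mod $4$), with no separate analysis needed. Your formula $m=\phi(p)=r^{k-1}(r-1)$ would instead be the order of the unit group of the ring $\mathbb{Z}/r^k\mathbb{Z}$, and under that reading the congruence condition lands on $r$ rather than on $p$, which does not match the statement when $k$ is even. This is not a gap in the argument you actually need, just a mismatch of conventions worth tidying up.
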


\begin{teo}\label{inverso}
Let $p$ be an  odd prime. If $p\equiv3$ (mod $4$), then 
	\begin{enumerate}
		\item $x\in QR(p)$ if and only if $-x\in NQR(p)$.
		\item $x\in NQR(p)$ if and only if $-x\in QR(p)$.
	\end{enumerate}
\end{teo}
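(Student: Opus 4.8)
The plan is to reduce both biconditionals to two ingredients already available in the excerpt: the fact (Theorem \ref{col:menosuno}) that $p\equiv3$ (mod $4$) forces $-1\in NQR(p)$, together with the multiplicative law for quadratic character recorded in the paragraph preceding the theorems, namely that the product of two residues, or of two non-residues, is a residue, while the product of a residue and a non-residue is a non-residue. Once these are in hand, each claim becomes a one-line computation built on the identity $-x=(-1)\cdot x$.

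First I would prove part 1. For the forward implication, assume $x\in QR(p)$. Since $p\equiv3$ (mod $4$), Theorem \ref{col:menosuno} gives $-1\in NQR(p)$, so $-x=(-1)\cdot x$ is the product of a non-residue and a residue, whence $-x\in NQR(p)$. For the converse, assume $-x\in NQR(p)$; writing $x=(-1)\cdot(-x)$ as a product of the two non-residues $-1$ and $-x$, the multiplicative law yields $x\in QR(p)$.

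Next I would obtain part 2, which is merely a reformulation of part 1. Because $\mathbb{Z}_p^*$ partitions into the disjoint sets $QR(p)$ and $NQR(p)$, the statement ``$x\in NQR(p)$ if and only if $-x\in QR(p)$'' is logically equivalent to ``$x\in QR(p)$ if and only if $-x\in NQR(p)$'', so part 2 follows immediately by complementation. Alternatively one repeats the same short argument starting from $x\in NQR(p)$, using $-x=(-1)\cdot x$ (non-residue times non-residue is a residue) to place $-x\in QR(p)$, and $x=(-1)\cdot(-x)$ for the converse.

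There is no serious obstacle here, as the statement is an elementary consequence of the group structure of $QR(p)$ inside $\mathbb{Z}_p^*$. The only points worth stating carefully are that the hypothesis $p\equiv3$ (mod $4$) is used exactly once, precisely to guarantee $-1\in NQR(p)$, and that the disjointness and exhaustiveness of $QR(p)$ and $NQR(p)$ in $\mathbb{Z}_p^*$ is what allows the two biconditionals to be deduced from one another.
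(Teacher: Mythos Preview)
Your argument is correct and is precisely the standard elementary derivation: use Theorem~\ref{col:menosuno} to place $-1\in NQR(p)$ and then invoke the multiplicativity of the quadratic character recalled just before the theorems. Note, however, that the paper does not actually prove Theorem~\ref{inverso}; it is stated as a well-known fact with a reference to standard number-theory texts, so there is no in-paper proof to compare against. Your write-up supplies exactly the short justification one would expect, and there is nothing to add.
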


In \cite{Avila}, it was proved the following (see also \cite{MR0249314}):

\begin{lema}\cite{Avila}
If $p\equiv3$ (mod 4) is an odd prime power with $p\neq3$ and $\alpha$ is a generator of $QR(p)$, then  the following set
\begin{eqnarray*}\label{strong_1}
	S_\beta=\left\{\{x,\beta x\}: x\in QR(p)\right\},
\end{eqnarray*} 
is a strong starter for $\mathbb{Z}_p$, for all $\beta\in NQR(p)\setminus\{-1\}$. 
\end{lema}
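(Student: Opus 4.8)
The plan is to verify the three defining conditions of a strong starter directly from the coset structure of $QR(p)$ and $NQR(p)$ inside $\mathbb{Z}_p^*$. Write $k=(p-1)/2$, so that $|QR(p)|=|NQR(p)|=k$ and $S_\beta$ consists of exactly $k$ pairs $\{x,\beta x\}$ with $x\in QR(p)$; since $\beta\in NQR(p)$ forces $\beta\neq 1$, each pair has two distinct entries. Throughout I would use the multiplicative facts recalled in the text (a product of two residues of the same type lies in $QR(p)$, a product of one of each type lies in $NQR(p)$) together with Theorem \ref{inverso}, which for $p\equiv 3\pmod 4$ gives $-QR(p)=NQR(p)$. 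The generator $\alpha$ of $QR(p)$ mentioned in the hypothesis plays no role in this particular argument.

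First I would establish the partition condition. The first entries of the pairs run over $QR(p)$. For the second entries, multiplication by the fixed element $\beta$ is a bijection of $\mathbb{Z}_p^*$, and since $\beta\in NQR(p)$ it sends each $x\in QR(p)$ to $\beta x\in NQR(p)$; as $|QR(p)|=|NQR(p)|=k$ and the map is injective, the set $\{\beta x:x\in QR(p)\}$ equals all of $NQR(p)$. Hence the $2k$ entries of $S_\beta$ are precisely $QR(p)\cup NQR(p)=\mathbb{Z}_p^*$, each occurring once.

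Next I would treat the difference condition. The two signed differences of a pair are $\pm(x-\beta x)=\pm x(1-\beta)$, and $1-\beta\neq 0$. Setting $d=1-\beta$, the set $\{xd:x\in QR(p)\}$ equals either $QR(p)$ or $NQR(p)$ according to whether $d\in QR(p)$ or $d\in NQR(p)$, because multiplication by $d$ permutes $\mathbb{Z}_p^*$ and carries the coset $QR(p)$ onto a single coset. By Theorem \ref{inverso} the companion set $\{-xd:x\in QR(p)\}$ is then the opposite coset. In either case the $2k$ signed differences are exactly $QR(p)\cup NQR(p)=\mathbb{Z}_p^*$, each once, which is the difference condition; this is precisely the step where the hypothesis $p\equiv 3\pmod 4$ is essential.

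Finally I would check the strong condition on the sums $x+\beta x=x(1+\beta)$. The hypothesis $\beta\neq -1$ guarantees $1+\beta\neq 0$, so every sum lies in $\mathbb{Z}_p^*$, and multiplication by the nonzero constant $1+\beta$ is injective, whence the sums $\{x(1+\beta):x\in QR(p)\}$ are pairwise distinct (indeed they form a single coset of $QR(p)$). This is exactly the requirement that all pair sums be distinct nonzero elements, completing the verification that $S_\beta$ is a strong starter. I would also note that $p\neq 3$ is needed only to ensure $NQR(p)\setminus\{-1\}$ is nonempty, since $-1\in NQR(p)$ by Theorem \ref{col:menosuno}, so that the statement is not vacuous. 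The argument is essentially bookkeeping with cosets; the one point where I would be most careful is confirming that each of the difference and sum sets hits its target \emph{exactly once} rather than merely being contained in it, which follows from the injectivity of multiplication by a nonzero constant combined with the cardinality count $|QR(p)|=|NQR(p)|=k$.
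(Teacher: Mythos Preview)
Your proof is correct. Each of the three conditions is verified cleanly: the partition condition follows from $\beta\cdot QR(p)=NQR(p)$; the difference condition uses that $(1-\beta)QR(p)$ is one coset of $QR(p)$ and, by Theorem~\ref{inverso}, its negative is the other; and the strong condition follows from the injectivity of multiplication by the nonzero element $1+\beta$. Your remarks on where each hypothesis ($p\equiv 3\pmod 4$, $\beta\neq -1$, $p\neq 3$) is used are also accurate, as is the observation that the generator $\alpha$ is irrelevant to this lemma.

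As for comparison with the paper: the paper does not actually supply a proof of this lemma. It is quoted from the cited reference \cite{Avila} (the result itself going back to Mullin and Nemeth \cite{MR0249314}), so there is no in-paper argument to compare against. Your direct coset verification is precisely the standard proof one finds in the literature for these ``Mullin--Nemeth'' starters.
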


In \cite{AvilaSkolem}, it was proved the following:

\begin{teo}\cite{AvilaSkolem}\label{thm:SkolemAdrian}
Let $p\equiv3$ (mod 8) be an odd prime and $\alpha$ be a generator of $QR(p)$, then the following strong starter $$S_\beta=\left\{\{x,\beta x\}: x\in QR(p)\right\}$$ for $\mathbb{Z}_p$ is Skolem, if $\beta=2$ and $\beta=\frac{1}{2}$.
\end{teo}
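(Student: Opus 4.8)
The plan is to reduce the statement to a single combinatorial claim about positive differences and then settle it using the complementary behaviour of quadratic and non-quadratic residues modulo a prime $p \equiv 3 \pmod 4$. Since $p \equiv 3 \pmod 8$, the second supplement to quadratic reciprocity gives $2 \in NQR(p)$, hence also $\tfrac12 = 2^{-1} \in NQR(p)$; as neither equals $-1$ (which would force $p=3$), the strong-starter lemma of \cite{Avila} already guarantees that $S_2$ and $S_{1/2}$ are strong starters for $\mathbb{Z}_p$. Thus the only thing left to verify is the Skolem property. Writing $k = (p-1)/2$ and identifying $\mathbb{Z}_p^*$ with $\{1,\dots,p-1\}$, a starter is Skolem precisely when the multiset of positive differences $y_i - x_i$ (with $y_i > x_i$) of its $k$ pairs equals $\{1,2,\dots,k\}$; so I must show exactly this for $S_2$ and $S_{1/2}$.

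First I would isolate the elementary observation that the positive difference of the pair $\{x, 2x \bmod p\}$ equals $\min(x, p-x)$ for every $x \in \{1,\dots,p-1\}$. This is a short two-case check: if $x \le k$ then $2x < p$, the representatives are $x < 2x$, and the difference is $2x - x = x = \min(x,p-x)$; if $x \ge k+1$ then $2x \bmod p = 2x - p < x$, and the difference is $x-(2x-p) = p - x = \min(x,p-x)$. Consequently the multiset of differences of $S_2 = \{\{x,2x\} : x \in QR(p)\}$ is exactly $\{\min(x,p-x) : x \in QR(p)\}$.

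Now I would invoke Theorem \ref{inverso}: because $p \equiv 3 \pmod 4$, for every $m \in \{1,\dots,k\}$ exactly one of $m$ and $p-m \equiv -m$ lies in $QR(p)$ (the other lies in $NQR(p)$). Hence, for each such $m$, precisely one $x \in QR(p)$ is drawn from the fibre $\{m, p-m\}$ of the folding map $x \mapsto \min(x,p-x)$, and that $x$ contributes the value $m$. Since there are $k$ values $m$ and $k = |QR(p)|$ pairs, the differences of $S_2$ run over $\{1,\dots,k\}$ each exactly once, which is the Skolem condition. For $\beta = \tfrac12$ the same argument applies after the reindexing $y = x/2$: as $x$ ranges over $QR(p)$ the element $y = x\cdot 2^{-1}$ ranges over $NQR(p)$, so $S_{1/2} = \{\{y,2y\} : y \in NQR(p)\}$, and the difference of each pair is again $\min(y,p-y)$; Theorem \ref{inverso} now guarantees that exactly one of $m, p-m$ lies in $NQR(p)$, so these differences also exhaust $\{1,\dots,k\}$. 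The main obstacle is conceptual rather than computational: one has to recognise that the Skolem data is governed entirely by the folding map $x \mapsto \min(x,p-x)$ and that Theorem \ref{inverso} is precisely the assertion that each fibre of this map meets $QR(p)$ (respectively $NQR(p)$) in a single point; once this is seen, both cases collapse to the same bijection and no delicate estimate remains.
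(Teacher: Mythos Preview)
Your proof is correct and essentially mirrors the paper's approach. The paper does not prove Theorem~\ref{thm:SkolemAdrian} directly (it is quoted from \cite{AvilaSkolem}), but the Skolem verification inside the proof of Theorem~\ref{thm:main} reproduces the same argument: one sets $Q_{1/2}=\{1,\dots,k\}$, observes that for each pair $\{x,2x\}$ the larger-minus-smaller difference is whichever of $x$ or $-x$ lies in $Q_{1/2}$, and then appeals to the fact that exactly one of $m,-m$ is a quadratic residue (Theorem~\ref{inverso}) to conclude that these differences exhaust $\{1,\dots,k\}$. Your ``folding map'' $x\mapsto\min(x,p-x)$ is precisely this computation, and your reindexing $y=2^{-1}x$ for the case $\beta=\tfrac12$ is the natural companion to the paper's case (ii); the only difference is that you make the bijection with $\{1,\dots,k\}$ explicit, whereas the paper leaves it implicit via the already-established starter property.
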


For the main result, it is needed the strong Skolem starter given in the above theorem.
\section{A family of strong Skolem starters for $\mathbb{Z}_{pq}$}\label{sec:main}

The following definitions and notations are obtained in \cite{Shalaby}. Let $G_n$ be the group of units of the ring $\mathbb{Z}_n$ (elements invertible with respect to multiplication). It is denoted by $\langle x\rangle_n$ the cyclic subgroup of $G_n$ generated by $x\in G_n$. Also, we will use the notation $aB=\{ab: b\in B\}$, where $a\in\mathbb{Z}$ and $B\subseteq\mathbb{Z}$. On the other hand, it is denoted by $ord(x)_n$ the order of the element $x\in G_n$; hence, $ord(x)_n=|\langle x\rangle_n|$. Whenever the group operation is irrelevant, it will consider $G_n$ and its cyclic multiplicative subgroups $\langle x\rangle_n$ in the set-theoretical sense and denote them by $\underline{G}_n$ and $\langle\underline{x}\rangle_n$, respectively. 

Let $p,q\equiv3$ (mod 8) be odd prime numbers such that $p<q$ and $(p-1)\nmid(q-1)$. We have $\underline{G}_{pq}=\{x\in\mathbb{Z}_{pq}^*: gcd(x,pq)=1\}$, with $|\underline{G}_{pq}|=(p-1)(q-1)$, see for example \cite{Cohen:1995}. Hence, $p\mathbb{Z}_q^*$, $q\mathbb{Z}_p^*$ and $\underline{G}_{pq}$ forms a partition of $\mathbb{Z}_{pq}^*$, since every element $x\in\mathbb{Z}_{pq}^*$ lies in one and only one of these sets. Moreover, it is not difficult to prove that, if $r\in\mathbb{Z}_q^*$ is a primitive root, then $|\langle r \rangle_{pq}|=lcm(p-1,q-1)=\frac{(p-1)(q-1)}{2}$, since $(p-1)\nmid(q-1)$.

\begin{lema}\label{lema:-1}
Let $p,q\equiv3$ (mod 8) be odd prime numbers. If $x\in\mathbb{Z}_p^*$ and $y\in\mathbb{Z}_q^*$ are primitive roots, then $-1\not\in\langle x^2 \rangle_{pq}$ and $-1\not\in\langle y^2 \rangle_{pq}$.
\end{lema}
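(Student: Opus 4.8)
The plan is to reduce everything modulo $p$ (respectively modulo $q$) and then quote Theorem \ref{col:menosuno}. Write $\pi_p\colon G_{pq}\to\mathbb{Z}_p^*$ for the reduction-modulo-$p$ map, which is a group homomorphism; here $x$ is to be read as a lift to $G_{pq}$ of the given primitive root, so that $\pi_p(x)$ is a primitive root of $\mathbb{Z}_p^*$ (such a lift exists and is coprime to $pq$ — e.g.\ the least positive residue of $x$, which is less than $p<q$, hence coprime to $q$). Symmetrically, $\pi_q\colon G_{pq}\to\mathbb{Z}_q^*$ denotes reduction modulo $q$, and $\pi_q(y)$ is a primitive root of $\mathbb{Z}_q^*$; note that the conclusion about $\langle y^2\rangle_{pq}$ will only use $\pi_q$, so the choice of the mod-$p$ component of the lift of $y$ is immaterial.

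First I would record the standard fact that if $g$ generates a cyclic group of order $p-1$ then $\langle g^2\rangle$ is the unique subgroup of index two, i.e.\ $\langle g^2\rangle_p=QR(p)$ when $g$ is a primitive root of $\mathbb{Z}_p^*$. Since a homomorphism carries the cyclic group generated by an element onto the cyclic group generated by the image of that element, this gives $\pi_p\big(\langle x^2\rangle_{pq}\big)=\langle\pi_p(x)^2\rangle_p=QR(p)$, and likewise $\pi_q\big(\langle y^2\rangle_{pq}\big)=QR(q)$.

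Now suppose, for contradiction, that $-1\in\langle x^2\rangle_{pq}$. Applying $\pi_p$ and using $\pi_p(-1)=-1$ yields $-1\in QR(p)$; but $p\equiv3\pmod 8$ gives $p\equiv3\pmod 4$, so Theorem \ref{col:menosuno} forces $-1\in NQR(p)$, a contradiction. Hence $-1\notin\langle x^2\rangle_{pq}$, and the same argument with $\pi_q$ and $q\equiv3\pmod 4$ shows $-1\notin\langle y^2\rangle_{pq}$. The proof involves no estimates or constructions; the only thing that needs a moment's care is the bookkeeping described above, namely fixing the lifts of $x$ and $y$ to $G_{pq}$ so that the reduction homomorphisms $\pi_p,\pi_q$ are defined and send the prescribed primitive roots to primitive roots of $\mathbb{Z}_p^*$ and $\mathbb{Z}_q^*$ respectively.
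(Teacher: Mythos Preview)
Your proof is correct and follows essentially the same route as the paper: reduce modulo $p$ (the paper packages this as the CRT isomorphism $\Psi:G_{pq}\to G_p\times G_q$, but then only uses the first coordinate) and invoke Theorem~\ref{col:menosuno} to conclude that $-1\notin QR(p)$ since $p\equiv3\pmod4$. Your version is in fact a bit more careful than the paper's about the bookkeeping of lifting $x$ from $\mathbb{Z}_p^*$ to $G_{pq}$, which the paper leaves implicit.
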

\begin{proof}
Recall that $G_{m}$ is the group of units of $\mathbb{Z}_{m}$. It is well known that the map $\Psi:G_{pq}\to G_{p}\times G_{q}$ defined by $\Psi(k_{pq})=(k_p,k_q)$, is an isomorphism between $G_{pq}$ and $G_p\times G_q$. Since $p$ and $q$ are prime numbers then $G_p=\mathbb{Z}_p^*$ and $G_q=\mathbb{Z}_q^*$. Let $x\in\mathbb{Z}_p^*$ and  $y\in\mathbb{Z}_q^*$ be primitive roots, and suppose that $-1\in\langle x^2\rangle_{pq}$. Then there exists $j\in\{0,\ldots,\frac{pq-1}{2}\}$ such that $x^{2j}=-1$ (mod $pq$). Hence, we have $\Psi(-1)=\Psi(x^{2j}_{pq})=(x^{2j}_p,x^{2j}_q)\neq(-1_p,-1_q)$ (by Theorem \ref{col:menosuno}). The proof is analogous if we suppose that $-1\in\langle y^2\rangle_{pq}$.
\end{proof}

\begin{lema}\label{lema:2}
Let $p,q\equiv3$ (mod 8) be odd prime numbers. If $x\in\mathbb{Z}_p^*$ and $y\in\mathbb{Z}_q^*$ are primitive roots, then $2\not\in\langle x^2 \rangle_{pq}$ and $2\not\in\langle y^2 \rangle_{pq}$. 
\end{lema}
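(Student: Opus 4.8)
The plan is to mirror exactly the argument used for Lemma~\ref{lema:-1}, transporting the question from $\langle x^2\rangle_{pq}$ to the product of cyclic groups $\langle x^2\rangle_p\times\langle x^2\rangle_q$ via the isomorphism $\Psi:G_{pq}\to G_p\times G_q$. First I would observe that $2$ is a unit in $\mathbb{Z}_{pq}$ (since $p,q$ are odd), so $\Psi(2)=(2_p,2_q)$ is well defined, and that $\langle x^2\rangle_{pq}$ is carried by $\Psi$ into $\langle x_p^2\rangle_p\times\langle x_q^2\rangle_q=QR(p)\times QR(q)$, because $x$ being a primitive root mod $p$ (resp.\ mod $q$) makes $x^2$ a generator of the quadratic residues mod $p$ (resp.\ mod $q$). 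Thus $2\in\langle x^2\rangle_{pq}$ would force in particular $2\in QR(p)$, i.e.\ $2_p\in\langle x_p^2\rangle_p$.

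The key step is then the standard supplement to quadratic reciprocity: $2\in QR(p)$ if and only if $p\equiv\pm1\pmod 8$. Since by hypothesis $p\equiv3\pmod 8$, we get $2\in NQR(p)$, contradicting $2\in QR(p)$. This settles $2\notin\langle x^2\rangle_{pq}$. The argument for $2\notin\langle y^2\rangle_{pq}$ is identical, using $q\equiv3\pmod 8$ and hence $2\in NQR(q)$, so that the second coordinate $2_q$ cannot lie in $\langle y_q^2\rangle_q=QR(q)$.

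The main obstacle, such as it is, is purely bookkeeping: one must be careful that the isomorphism $\Psi$ genuinely maps $\langle x^2\rangle_{pq}$ onto $\langle x_p^2\rangle_p\times\langle x_q^2\rangle_q$ rather than merely into some subgroup, and that $x_p^2$ really does generate $QR(p)$. The former follows because $\Psi$ is an isomorphism, so it maps the cyclic group generated by $x^2$ onto the cyclic group generated by $\Psi(x^2)=(x_p^2,x_q^2)$; the latter is immediate since a primitive root squared generates the index-$2$ subgroup $QR(p)$. Strictly speaking one only needs the inclusion $\langle x^2\rangle_{pq}\subseteq\Psi^{-1}(\langle x_p^2\rangle_p\times\langle x_q^2\rangle_q)$, which already forces any element of $\langle x^2\rangle_{pq}$ to have its $p$-component in $QR(p)$, and that inclusion is trivial. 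So the whole proof reduces to: apply $\Psi$, note the first coordinate must be a quadratic residue mod $p$, and invoke the fact that $2$ is a non-residue mod $p$ when $p\equiv3\pmod 8$ (symmetrically for $q$); since $p\neq3$ and $q\neq3$ are not actually needed here — only $p,q\equiv3\pmod 8$ — the statement follows verbatim in the style of the preceding lemma.
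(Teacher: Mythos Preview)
Your proposal is correct and follows essentially the same route as the paper's own proof: both pass through the CRT isomorphism $\Psi:G_{pq}\to G_p\times G_q$, reduce the question to whether $2$ is a quadratic residue modulo $p$ (resp.\ $q$), and conclude by the second supplement to quadratic reciprocity that $2\notin QR(p)$ when $p\equiv3\pmod 8$. If anything, your write-up is cleaner about why the $p$-coordinate of any element of $\langle x^2\rangle_{pq}$ must land in $QR(p)$, which is exactly the point the paper compresses into a single line.
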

\begin{proof}
Let $\Psi:G_{pq}\to G_{p}\times G_q$ given by $\Psi(k_{pq})=(k_p,k_q)$ as above, and let $x\in\mathbb{Z}_p^*$ and $y\in\mathbb{Z}_q^*$ be primitive roots. Suppose that $2\in\langle x^2\rangle_{pq}$. Then there exists $j\in\{0,\ldots,\frac{pq-1}{2}\}$ such that $x^{2j}=2$ (mod $pq$). Hence, we have $\Psi(2)=\Psi(x^{2j}_{pq})=(x^{2j}_p,x^{2j}_q)\neq(2,2)$, since if $p,q\equiv3$ (mod 8) then $2\not\in \langle x^2\rangle_p$ and $2\not\in \langle y^2\rangle_q$ (see for example \cite{Ireland}). The proof is analogous if we suppose that $2\in\langle y^2\rangle_{pq}$. 
\end{proof}

\begin{teo}\label{thm:main}
Let $p<q$ be odd prime numbers such that $p,q\equiv3$ (mod 8)  and $(p-1)\nmid(q-1)$. Then
$\mathbb{Z}_{pq}$ admits a strong Skolem starter.
\end{teo}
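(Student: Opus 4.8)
The plan is to build the strong Skolem starter for $\mathbb{Z}_{pq}$ by gluing together copies of the strong Skolem starter for $\mathbb{Z}_p$ (resp. $\mathbb{Z}_q$) furnished by Theorem~\ref{thm:SkolemAdrian} on the ``ramified'' parts $q\mathbb{Z}_p^*$ and $p\mathbb{Z}_q^*$, and by handling the unit part $\underline{G}_{pq}$ via a multiplicative pairing $x\mapsto\beta x$ with $\beta\in\{2,\tfrac12\}$ on cosets of $\langle x_0^2\rangle_{pq}$ for a suitable primitive root $x_0$. Concretely, decompose $\mathbb{Z}_{pq}^*=q\mathbb{Z}_p^*\ \dot\cup\ p\mathbb{Z}_q^*\ \dot\cup\ \underline{G}_{pq}$ as in the paragraph preceding Lemma~\ref{lema:-1}. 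On $q\mathbb{Z}_p^*$ the map $x\mapsto qx$ identifies this set with $\mathbb{Z}_p^*$, and since $q$ is a unit mod $p$, I can transport the starter $S^{(p)}_2=\{\{x,2x\}:x\in QR(p)\}$ to a set of pairs inside $q\mathbb{Z}_p^*$; similarly for $p\mathbb{Z}_q^*$ using $S^{(q)}_2$. On $\underline{G}_{pq}$, I pair $x$ with $2x$ (or $\tfrac12 x$) within each coset of the index-$2$ subgroup $\langle x_0^2\rangle_{pq}$, which is legitimate precisely because Lemma~\ref{lema:2} guarantees $2\notin\langle x_0^2\rangle_{pq}$ (so $x$ and $2x$ lie in different parts of a $2$-element partition of each coset and the pairing is fixed-point-free) and Lemma~\ref{lema:-1} guarantees $-1\notin\langle x_0^2\rangle_{pq}$ (so $x$ and $-x$ are never paired, which is what makes the difference set work out).

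The verification then splits into the three starter axioms. First, the \emph{covering} condition: the three pieces partition $\mathbb{Z}_{pq}^*$, and within each piece the chosen pairing is a fixed-point-free involution-free matching that uses each element exactly once — for the ramified pieces this is inherited from the fact that $S^{(p)}_2$, $S^{(q)}_2$ are starters, and for $\underline{G}_{pq}$ it follows from $2\notin\langle x_0^2\rangle_{pq}$ together with a counting check that the $2$-element blocks $\{z,2z\}$ tile each coset. Second, the \emph{difference} condition $\{\pm(x-y)\}=\mathbb{Z}_{pq}^*$: the differences $x-2x=-x$ coming from the $\underline{G}_{pq}$-pairs run over $-\underline{G}_{pq}=\underline{G}_{pq}$, the differences from the $q\mathbb{Z}_p^*$-pairs run over $q\cdot(\text{differences of }S^{(p)}_2)$ which by the starter property for $\mathbb{Z}_p$ run over $q\mathbb{Z}_p^*$, and likewise the $p\mathbb{Z}_q^*$-pairs give $p\mathbb{Z}_q^*$; these three sets are exactly the partition of $\mathbb{Z}_{pq}^*$, so the difference condition holds. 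Third, the \emph{strong} condition: the sums $x+2x=3x$ over the $\underline{G}_{pq}$-pairs are distinct (since $3$ is a unit mod $pq$, as $p,q\equiv3\pmod 8$ forces $p,q\neq3$), the sums from the ramified pieces lie in $3q\mathbb{Z}_p^*$ and $3p\mathbb{Z}_q^*$ respectively and are distinct there by the strongness of $S^{(p)}_2,S^{(q)}_2$, and one must check these three families of sums are mutually disjoint — this needs $3q\mathbb{Z}_p^*$, $3p\mathbb{Z}_q^*$ and the sum-set from $\underline{G}_{pq}$ to occupy disjoint residue classes, which again follows from the three-part partition of $\mathbb{Z}_{pq}^*$ since $3$ is invertible.

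The remaining and most delicate point is the \emph{Skolem} condition: writing pairs as $\{x_i,y_i\}$ with $y_i-x_i\equiv i\pmod{pq}$ and having the multiset of gaps be exactly $\{1,2,\ldots,(pq-1)/2\}$. Here I cannot simply quote Theorem~\ref{thm:SkolemAdrian} termwise, because transporting $S^{(p)}_2$ into $q\mathbb{Z}_p^*$ via multiplication by $q$ rescales the gaps, and the gaps produced by the $\underline{G}_{pq}$-pairs (namely $x$ with sign chosen so the gap is in $(0,pq/2)$) are a new computation. The plan is to use the Chinese Remainder Theorem: a Skolem-type labelling for $\mathbb{Z}_{pq}$ corresponds, under $\Psi:\mathbb{Z}_{pq}\cong\mathbb{Z}_p\times\mathbb{Z}_q$, to a simultaneous labelling; I would show that the gap multiset of the constructed starter, read through CRT, is a bijective reindexing of $\{1,\dots,(pq-1)/2\}$ by combining the Skolem property of $S^{(p)}_2$ on the $\mathbb{Z}_p$-coordinate, that of $S^{(q)}_2$ on the $\mathbb{Z}_q$-coordinate, and the fact that on $\underml{G}_{pq}$ choosing the representative of $\{x,2x\}$ with gap $\pm x\in(0,pq/2)$ realizes each unit gap exactly once. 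This CRT bookkeeping — making the three gap-contributions assemble into a single interval of consecutive integers without overlap or omission — is where the real work lies, and where the hypotheses $p,q\equiv3\pmod 8$ (ensuring $2,\tfrac12\in NQR$ and $-1\in NQR$ in each factor) and $(p-1)\nmid(q-1)$ (ensuring $\langle x_0^2\rangle_{pq}$ has index exactly $2$ in $G_{pq}$, so the pairing on the unit part is unambiguous) are both essential.
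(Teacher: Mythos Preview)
Your construction and the starter/strong verifications are essentially those of the paper: partition $\mathbb{Z}_{pq}^*=q\mathbb{Z}_p^*\cup p\mathbb{Z}_q^*\cup\underline{G}_{pq}$, pair $x$ with $2x$ throughout, and note that the differences $\pm x$ and the sums $3x$ each run bijectively over the three pieces (using that $3$ is a unit). One technical slip: $\langle x_0^2\rangle_{pq}$ does \emph{not} have index $2$ in $G_{pq}$. Since $G_{pq}\cong\mathbb{Z}_p^*\times\mathbb{Z}_q^*$ is not cyclic, the cyclic group $\langle r_2\rangle_{pq}$ already has index at least $2$, and squaring doubles that. The paper takes $\alpha_2=r_2^2$ with $\langle\alpha_2\rangle_{pq}$ of index $4$ in $G_{pq}$ and chooses an extra coset representative $\lambda\notin\langle\alpha_2\rangle_{pq}\cup 2\langle\alpha_2\rangle_{pq}$, forming the pairs $\{x,2x\}$ and $\{\lambda x,2\lambda x\}$ for $x\in\langle\alpha_2\rangle_{pq}$; your ``index $2$'' description would leave half of $\underline{G}_{pq}$ unpaired.

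The substantive divergence is the Skolem step, which you overestimate. The paper uses no CRT bookkeeping at all; it exploits only the uniform shape of the pairs. Every pair in $S$ is $\{a,2a\}$ for some $a\in\mathbb{Z}_{pq}^*$ (with $a$ one of $qx$, $px$, $x$, $\lambda x$). Write $pq=2t+1$ and view elements as integers in $\{1,\dots,2t\}$. If $a\le t$ then $2a\le 2t<pq$, so there is no reduction and the larger-minus-smaller gap is $2a-a=a\in\{1,\dots,t\}$. If $a>t$ then $2a$ reduces to $2a-pq<a$, and the gap is $a-(2a-pq)=pq-a\equiv -a\in\{1,\dots,t\}$. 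Thus the Skolem gap of $\{a,2a\}$ is simply the representative of $\{a,-a\}$ lying in $Q_{1/2}=\{1,\dots,t\}$; since the starter property already gives $\{\pm a:\{a,2a\}\in S\}=\mathbb{Z}_{pq}^*$, these $t$ gaps are distinct and hence exhaust $\{1,\dots,t\}$. Your worry about ``rescaled gaps'' from the ramified pieces dissolves: multiplying the pair $\{x,2x\}$ by $q$ yields $\{qx,2(qx)\}$, again of the form $\{a,2a\}$, so the same one-line argument applies uniformly across all three parts.
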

\begin{proof}
Let $r_1\in\mathbb{Z}_p^*$ and $r_2\in\mathbb{Z}_q^*$ be primitive roots of $\mathbb{Z}_p^*$ and $\mathbb{Z}_q^*$, respectively. Hence $\alpha_1=r_1^2$ is a generator of $QR(p)$ and $\alpha_2=r_2^2$ is a generator of $QR(q)$. Since $p\mathbb{Z}_q^*$, $q\mathbb{Z}_p^*$ and $\underline{G}_{pq}$ forms a partition of $\mathbb{Z}_{pq}^*$, $2\not\in \langle x^2\rangle_p$ and $2\not\in \langle y^2\rangle_q$, and by Lemma \ref{lema:2}, we can define
\begin{eqnarray*}
pS_{q}&=&\left\{\{px,2px\}:x\in QR(q)\right\}\\
qS_{p}&=&\left\{\{qx,2qx\}:x\in QR(p)\right\}\\
S_{pq}&=&\left\{\{x,2x\}:x\in\langle\alpha_2\rangle_{pq}\right\}\cup\left\{\{\lambda x,2\lambda x\}:x\in\langle\alpha_2\rangle_{pq}\right\},
\end{eqnarray*}
where $\lambda\not\in\langle\alpha_2\rangle_{pq}\cup2\langle\alpha_2\rangle_{pq}$. It is easy to see that $\{\pm px:x\in QR(q)\}=p\mathbb{Z}_{q}^*$, $\{\pm qx:x\in QR(p)\}=q\mathbb{Z}_{p}^*$ and $\{\pm x:x\in\langle\alpha_2\rangle_{pq}\}\cup\{\pm \lambda x:x\in\langle\alpha_2\rangle_{pq}\}=\underline{G}_{pq}$. Hence, the set $S=pS_{q}\cup qS_{p}\cup S_{pq}$ is a starter. 

Let define 
\begin{eqnarray*}
qS_p^+&=&\{3px:x\in QR(p)\}\\
pS_q^+&=&\{3qx:x\in QR(q)\}\\
S_{pq}^+&=&\{3x:x\in\langle\alpha_2\rangle_{pq}\}\cup\{3\lambda x:x\in\langle\alpha_2\rangle_{pq}\}
\end{eqnarray*}
Since $l_1x+l_2y\neq0$, for all different $x,y\in QR(p)\cup QR(q)\cup\langle\alpha\rangle_{pq}$, with $l_1, l_2\in\{1,p,q,\lambda\}$. Then $|qS_p^+|=|QR(p)|$, $|pS_q^+|=|QR(q)|$ and $|S_{pq}^+|=|\langle r_2\rangle_{pq}|$. By Lemma \ref{lema:-1}, it is not difficult to see that $|qS_p^+\cup pS_q^+\cup S_{p}q^+|=|qS_p^+|+|pS_q^+|+|S_{pq}^+|=\frac{p-1}{2}+\frac{q-1}{2}+\frac{(p-1)(q-1)}{2}=\frac{pq-1}{2}$. Hence, the set $S$ is strong.

Finally, we give an analogous proof of Theorem 2.4 given in \cite{AvilaSkolem} (see case (i)) to prove $S$ is Skolem. Let $pq=2t+1$ and $1<2<\cdots<2t$ be the order of the non-zero elements of $\mathbb{Z}_{pq}^*$. Define $Q_{\frac{1}{2}}=\{1,2,\ldots,t\}$. To prove that $S$ is Skolem, it is sufficient to prove that, if $2lx>lx$ then $lx\in Q_\frac{1}{2}$, and if $lx>2lx$ then $-lx\in Q_\frac{1}{2}$, where $l\in\{1,p,q,\lambda\}$. Suppose that $lx\in Q_\frac{1}{2}$, for $l\in\{1,p,q,\lambda\}$, then $2lx>lx$, which implies that $2lx-lx=lx\in Q_\frac{1}{2}$. On the other hand, if $lx\not\in Q_\frac{1}{2}$, for $l\in\{1,p,q,\lambda\}$, then $-lx\in Q_{\frac{1}{2}}$ (by Theorem \ref{col:menosuno} and Lemma \ref{lema:-1}). Hence, $2(-lx)>-lx$, which implies that $-2lx+lx=-lx\in Q_{\frac{1}{2}}$. Hence, the set $S$ is Skolem. 

Therefore, $\mathbb{Z}_{pq}$ admits a strong Skolem starter. 
\end{proof}

\begin{coro}\label{coro:final}
Let $p<q$ be odd prime numbers such that $p,q\equiv3$ (mod 8) and $(p-1)\nmid(q-1)$, and let $r_1\in\mathbb{Z}_p^*$ and $r_2\in\mathbb{Z}_q^*$ be primitive roots of $\mathbb{Z}_p^*$ and $\mathbb{Z}_q^*$, respectively. If $pS_{q}^{-}=\left\{\{px,2^{-1}px\}:x\in QR(q)\right\}$, $S_{p}^{-}=\left\{\{qx,2^{-1}qx\}:x\in QR(p)\right\}$ and
$S_{pq}^{-}=\left\{\{x,2^{-1}x\}:x\in\langle r_2^2\rangle_{pq}\right\}\cup\left\{\{\lambda x,2^{-1}\lambda x\}:x\in\langle r_2^2\rangle_{pq}\right\}$,	then set $S^{-}=pS_{q}^{-}\cup qS_{p}^{-}\cup S_{pq}^{-}$ is a strong Skolem starter for $\mathbb{Z}_{pq}$.
\end{coro}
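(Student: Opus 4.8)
\textbf{Proof proposal for Corollary \ref{coro:final}.}

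The plan is to mirror the proof of Theorem \ref{thm:main} almost verbatim, replacing the multiplier $2$ by its multiplicative inverse $2^{-1}$ throughout. First I would observe that the three sets $pS_q^-$, $qS_p^-$ and $S_{pq}^-$ are well-defined for the same reasons as in Theorem \ref{thm:main}: since $2\notin\langle x^2\rangle_p$ and $2\notin\langle y^2\rangle_q$, the same holds for $2^{-1}$ (a subgroup is closed under inversion, so $2^{-1}\in\langle\alpha\rangle$ iff $2\in\langle\alpha\rangle$), and likewise Lemma \ref{lema:2} gives $2^{-1}\notin\langle x^2\rangle_{pq}$ and $2^{-1}\notin\langle y^2\rangle_{pq}$. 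Hence $\lambda$ can be chosen outside $\langle\alpha_2\rangle_{pq}\cup 2^{-1}\langle\alpha_2\rangle_{pq}$, and exactly as before one checks $\{\pm px:x\in QR(q)\}=p\mathbb{Z}_q^*$, $\{\pm qx:x\in QR(p)\}=q\mathbb{Z}_p^*$ and $\{\pm x:x\in\langle\alpha_2\rangle_{pq}\}\cup\{\pm\lambda x:x\in\langle\alpha_2\rangle_{pq}\}=\underline{G}_{pq}$, so that $S^-$ is a starter for $\mathbb{Z}_{pq}$.

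Next I would verify that $S^-$ is strong. The sums of the pairs in $pS_q^-$, $qS_p^-$, $S_{pq}^-$ are now the sets $(1+2^{-1})pS_q=3\cdot 2^{-1}pS_q$, etc. Since $3\cdot 2^{-1}$ is a unit in $\mathbb{Z}_{pq}$ (both $3$ and $2$ are coprime to $pq$, because $p,q\equiv 3\pmod 8$ forces $p,q\neq 2,3$ except we must note $p,q\geq 11$ so $3\nmid pq$), multiplication by $3\cdot 2^{-1}$ is a bijection of $\mathbb{Z}_{pq}^*$ onto itself; therefore the cardinalities of the three sum-sets are again $\frac{p-1}{2}$, $\frac{q-1}{2}$ and $\frac{(p-1)(q-1)}{2}$, and by Lemma \ref{lema:-1} (which guarantees that the three ``pieces'' of $\underline{G}_{pq}$ and the pieces coming from $p\mathbb{Z}_q^*$, $q\mathbb{Z}_p^*$ do not collide, using $l_1x+l_2y\neq 0$) their union has size $\frac{p-1}{2}+\frac{q-1}{2}+\frac{(p-1)(q-1)}{2}=\frac{pq-1}{2}$. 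Hence all sums are distinct and $S^-$ is strong.

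Finally I would show $S^-$ is Skolem, again following the argument in the last paragraph of the proof of Theorem \ref{thm:main} (which is the analogue of case (ii) rather than case (i) of Theorem 2.4 in \cite{AvilaSkolem}). With $pq=2t+1$ and $Q_{\frac12}=\{1,\ldots,t\}$, for a pair $\{lx,2^{-1}lx\}$ with $l\in\{1,p,q,\lambda\}$ one checks: if $lx\in Q_{\frac12}$ then $2^{-1}lx<lx$ and the difference is $lx-2^{-1}lx=2^{-1}lx$; and by Theorem \ref{col:menosuno} together with Lemma \ref{lema:-1}, if $lx\notin Q_{\frac12}$ then $-lx\in Q_{\frac12}$, so that $2^{-1}(-lx)<-lx$ and the difference is $(-lx)-2^{-1}(-lx)=-2^{-1}lx$. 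In either case the positive difference of the pair equals $\pm 2^{-1}lx$, and as $x$ (resp.\ $lx$) ranges over the appropriate coset the multiset of these differences is exactly $\{1,\ldots,t\}$ — this is where the choice of multiplier $2^{-1}$ (rather than an arbitrary non-residue) is essential, and it is precisely Theorem \ref{thm:SkolemAdrian} applied at each prime. The only point requiring a little care is checking that the ``halving'' bookkeeping lines up the differences $1,2,\ldots,t$ correctly across the three blocks $pS_q^-$, $qS_p^-$, $S_{pq}^-$; since these blocks are scalar images of the Skolem starters of Theorem \ref{thm:SkolemAdrian} by units $p$, $q$ and $1$ (resp.\ $\lambda$), and the partition $p\mathbb{Z}_q^*\cup q\mathbb{Z}_p^*\cup\underline{G}_{pq}=\mathbb{Z}_{pq}^*$ is respected, this is routine. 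This completes the proof that $S^-$ is a strong Skolem starter for $\mathbb{Z}_{pq}$.

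The main obstacle, such as it is, is the Skolem verification: one must be sure that passing from $2$ to $2^{-1}$ does not disturb the ``$y_i-x_i=i$'' condition, which hinges on the fact that $\{lx,2^{-1}lx\}$ and $\{lx,2lx\}$ have the same unordered difference up to sign, so the two constructions are genuinely parallel and Theorem \ref{thm:SkolemAdrian} (which already covers both $\beta=2$ and $\beta=\frac12$) applies without change.
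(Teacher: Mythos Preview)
Your overall strategy matches the paper's, which simply says the proof is ``analogous to Theorem~\ref{thm:main} using case~(ii) of Theorem~\ref{thm:SkolemAdrian}''. The starter and strong parts of your argument are fine.

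There is, however, a slip in your Skolem verification. The claim ``if $lx\in Q_{\frac12}$ then $2^{-1}lx<lx$'' is false: with $pq=2t+1$ one has $2^{-1}=t+1$, so for $lx=1$ we get $2^{-1}\cdot 1=t+1>1$. The correct case split is on whether $2^{-1}lx\in Q_{\frac12}$, not on $lx$. Equivalently, put $y=2^{-1}lx$ so that the pair $\{lx,2^{-1}lx\}$ becomes $\{2y,y\}$; then the last paragraph of the proof of Theorem~\ref{thm:main} applies verbatim with $y$ in place of $lx$, giving larger-minus-smaller equal to $y$ or $-y$ in $Q_{\frac12}$. Your closing remark that ``$\{lx,2^{-1}lx\}$ and $\{lx,2lx\}$ have the same unordered difference up to sign'' is also not right (the differences are $\pm 2^{-1}lx$ and $\pm lx$, respectively); the real point is that every pair of $S^-$ \emph{is} of the form $\{y,2y\}$ after the substitution $y=2^{-1}lx$, which is why the Skolem check is genuinely identical to that in Theorem~\ref{thm:main}. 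With this correction the proof goes through.
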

\begin{proof}
The proof is analogous of Theorem \ref{thm:main} and using the case (ii) of Theorem \ref{thm:SkolemAdrian} given in \cite{AvilaSkolem}.
\end{proof}

\subsection{Example}
Consider $\mathbb{Z}_{11\cdot 19}$. We have $\mathbb{Z}_{11\cdot 19}^*=19\underline{G}_{11}\cup 11\underline{G}_{19}\cup\underline{G}_{11\cdot19}$. In this case $r_1=2$ is a primitive root of $\mathbb{Z}_{11}^*$ and $r_2=2$ is a primitive root of $\mathbb{Z}_{19}^*$. We have $\alpha_1=r_1^2$, $\alpha_2=r_2^2$ and $\lambda=3$. Hence, the pairs from $19\underline{G}_{11}$, $11\underline{G}_{19}$ and $\underline{G}_{11^2}$ are:
\begin{eqnarray*}
19\underline{G}_{11}&:& \{19,38\}, \{76,152\}, \{95,190\}, \{171,133\}, \{57,114\}.\\
11\underline{G}_{19}&:& \{11,22\}, \{44,88\}, \{176,143\}, \{77,154\}, \{99,198\}, \{187,165\},\\
&&\{121,33\}, \{66,132\}, \{55,110\}.\\
\underline{G}_{11\cdot19}&:& \{1,2\}, \{4,8\}, \{16,32\}, \{64,128\}, \{47,94\}, \{188,167\}, \{125,41\},\\
&&\{82,164\}, \{119,29\}, \{58,116\}, \{23,46\}, \{92,184\}, \{159,109\}, \{9,18\},\\
&&\{36,72\}, \{144,79\}, \{158,107\}, \{5,10\}, \{20,40\}, \{80,160\}, \{111,13\},\\
&&\{26,52\}, \{104,208\}, \{207,205\}, \{201,193\}, \{177,145\}, \{81,162\}, \{115,21\}\\
&&\{42,84\}, \{168,127\}, \{45,90\}, \{180,151\}, \{93,186\}, \{163,117\}, \{25,50\}\\
&&\{100,200\}, \{191,173\}, \{137,65\}, \{130,51\}, \{102,204\}, \{199,189\}, \{169,129\}\\
&&\{49,98\}, \{196,183\}, \{157,105\}, \{3,6\}\\
&\cup& \{12,24\}, \{48,96\}, \{192,175\}, \{141,73\}, \{146,83\}, \{166,123\}, \{37,74\},\\
&& \{148,87\}, \{174,139\}, \{69,138\}, \{67,134\}, \{59,118\}, \{27,54\}, \{108,7\},\\ 
&&\{14,28\}, \{56,112\}, \{15,30\}, \{60,120\}, \{31,62\}, \{124,39\}, \{78,156\},\\ 
&&\{103,206\}, \{203,197\}, \{185,161\}, \{113,17\}, \{34,68\}, \{136,63\}, \{126,43\},\\ 
&&\{86,172\}, \{135,61\}, \{122,35\}, \{70,140\}, \{71,142\}, \{75,150\}, \{91,182\},\\ 
&&\{155,101\}, \{202,195\}, \{181,153\}, \{97,194\}, \{179,149\}, \{89,178\},\\
&&\{147,85\}, \{170,131\}, \{53,106\}.
\end{eqnarray*}
This strong Skolem starter of $\mathbb{Z}_{11\cdot19}$ is the same of the Example 4.12 given in \cite{Shalaby}. Now, by Corollary \ref{coro:final}, we have a different strong Skolem starter of $\mathbb{Z}_{11\cdot19}$, using the sme parameters:
\begin{eqnarray*}
	19\underline{G}_{11}&:& \{19,114\}, \{76,38\}, \{95,152\}, \{171,190\}, \{57,133\}.\\
	11\underline{G}_{19}&:& \{11,110\}, \{44,22\}, \{176,88\}, \{77,143\}, \{99,154\}, \{187,198\},\\
	&&\{121,165\}, \{66,33\}, \{55,132\}.\\
\end{eqnarray*}	
\begin{eqnarray*}	
	\underline{G}_{11\cdot19}&:& \{1,105\}, \{4,2\}, \{16,8\}, \{64,32\}, \{47,128\}, \{188,94\}, \{125,167\},\\
	&&\{82,41\}, \{119,164\}, \{58,29\}, \{23,116\}, \{92,46\}, \{159,84\}, \{9,109\},\\
	&&\{36,18\}, \{144,72\}, \{158,79\}, \{5,107\}, \{20,10\}, \{80,40\}, \{111,160\},\\
	&&\{26,13\}, \{104,52\}, \{207,208\}, \{201,205\}, \{177,193\}, \{81,145\}, \{115,162\}\\
	&&\{42,21\}, \{168,84\}, \{45,127\}, \{180,90\}, \{93,151\}, \{163,186\}, \{25,117\}\\
	&&\{100,50\}, \{191,200\}, \{137,173\}, \{130,65\}, \{102,51\}, \{199,204\}, \{169,189\}\\
	&&\{49,129\}, \{196,98\}, \{157,183\}, \{3,106\}\\
	&\cup& \{12,6\}, \{48,24\}, \{192,96\}, \{141,175\}, \{146,73\}, \{166,83\}, \{37,123\},\\
	&& \{148,74\}, \{174,87\}, \{69,139\}, \{67,138\}, \{59,134\}, \{27,118\}, \{108,54\},\\ 
	&&\{14,7\}, \{56,28\}, \{15,112\}, \{60,30\}, \{31,120\}, \{124,62\}, \{78,39\},\\ 
	&&\{103,156\}, \{203,206\}, \{185,197\}, \{113,161\}, \{34,17\}, \{136,68\}, \{126,63\},\\ 
	&&\{86,43\}, \{135,172\}, \{122,61\}, \{70,35\}, \{71,140\}, \{75,142\}, \{91,150\},\\ 
	&&\{155,182\}, \{202,101\}, \{181,195\}, \{97,153\}, \{179,194\}, \{89,149\},\\
	&&\{147,178\}, \{170,85\}, \{53,131\}.
\end{eqnarray*}

\

{\bf Acknowledgment}

Research was partially supported by SNI and CONACyT.

\bibliographystyle{amsplain}

\end{document}